\newcommand{\Z}{\mathbb{Z}}
\newcommand{\R}{\mathbb{R}}
\newcommand{\Aut}{{\rm Aut}}
\newcommand{\PSL}{{\rm PSL}}
\newcommand{\lcm}{{\rm lcm}}
\newcommand{\rowstretch}{$\phantom{|^{|^|}_{|}}$}
\newtheorem{theorem}{Theorem}[section]
\newtheorem{lemma}[theorem]{Lemma}
\newtheorem{example}[theorem]{Example}
\title{\Large \bf Soluble quotients of triangle groups} 
\author{
Marston D.E. Conder \\[+4pt] 
{\small Department of Mathematics, University of Auckland} \\[-3pt] 
{\small Private Bag 92019 Auckland, New Zealand} \\[+0pt]  
{\small Email: {\tt m.conder@auckland.ac.nz}} 
\\[+12pt] 
Darius W. Young\\[+4pt] 
{\small Department of Mathematics, University of Auckland} \\[-3pt] 
{\small Private Bag 92019 Auckland, New Zealand} \\[+0pt]  
{\small Email: {\tt darius.young@auckland.ac.nz}} 
}
\date{} 
\begin{document} 
\maketitle 

\begin{abstract}
This paper helps explain the prevalence of soluble groups among the automorphism groups 
of regular maps (at least for `small' genus), by showing that every non-perfect hyperbolic ordinary triangle group 
$\Delta^+(p,q,r) = \langle\, x,y \ | \ x^p = y^q = (xy)^r = 1 \,\rangle$ has a smooth finite soluble quotient of derived 
length $c$ for some $c \le 3$, and infinitely many such quotients of derived length $d$ for every $d > c$. 
\\[-18pt]
\end{abstract}

\section{Introduction} 

For any given integers $p$, $q$ and $r$ greater than $1$, the ordinary $(p,q,r)$ {\em triangle group\/} is the abstract group 
$\Delta^+(p,q,r)$ with presentation $\langle\, x,y,z \ | \ x^p = y^q = z^r = xyz = 1 \,\rangle$, or more simply, 
$\langle\, x,y \ | \ x^p = y^q = (xy)^r = 1 \,\rangle$. 
Such groups and their finite quotients arise naturally in a number of branches of mathematics, most notably in the study of automorphism groups 
of algebraic curves and compact Riemann surfaces, and in the construction and analysis of regular maps on closed surfaces.

More specifically, we note (following some recommendations by a referee) that quotients of triangle groups 
define automorphisms of algebraic curves with algebraic coefficients \cite{Belyi}, and if there exists a 
holomorphic involution inverting the images of two of the three generators, then those coefficients can 
be chosen to be real \cite{KockSingerman}.
They also play a key role in the Grothendieck theory of `dessins d'enfants' (see \cite{GG} for example),
as well as contributing to the study of moduli spaces of complex algebraic curves of given genus, 
in which they allow the construction of isolated points in the Zariski and Teichm{\" u}ller topologies 
-- see \cite{Kulkarni} for example, which initiated a longer series of papers by other authors on this topic. 

Arguably the most famous example is the ordinary $(2,3,7)$ triangle group, which plays a major role in a well-known 
theorem of Hurwitz (1893), stating that a compact Riemann surface of genus $g > 1$ has at most $84(g-1)$ 
conformal automorphisms, with this upper bound attained if and only if the group of all its conformal automorphisms 
is a non-trivial finite quotient of $\Delta^+(2,3,7)$. 
Further details can be found in \cite{Conder-HurwitzSurvey} for example. 

Next, an orientably-regular map ${\cal M} = (X, S)$ is an embedding of a graph or multigraph $X$ into an 
orientable surface $S$, dividing $S$ into simply-connected regions called the faces of the map, 
with the property that the group $G = \Aut^+({\cal M})$ of all orientation- and incidence-preserving automorphisms of ${\cal M}$ 
acts transitively on the set of all arcs (ordered pairs of adjacent vertices) of the graph $X$. 
In any such map, every vertex has the same valency, say $k$, and every face is bounded by a closed walk of the same length, say $m$, 
and then ${\cal M}$ is said to have type $(m,k)$, and $G = \Aut^+({\cal M})$ is a smooth homomorphic image 
of the ordinary $(2,k,m)$ triangle group -- where `smooth' means that the orders $2$, $k$ and $m$ of the generators are preserved, 
or equivalently, that the kernel of the homomorphism from  $\Delta^+(2,k,m)$ onto $G$ is torsion-free. 
Conversely, every smooth homomorphic image $G$ of the ordinary $(2,k,m)$ triangle group gives rise to 
an orientably-regular map ${\cal M}$ of type $(m,k)$ with $G \cong \Aut^+({\cal M})$.  

These facts have been exploited to determine all orientably-regular maps on surfaces of small genus $g > 1$. 
For example, see~\cite{Conder-100} for genus $2$ to $101$, and the associated webpages for genus $2$ to $301$,  
and~\cite{ConderPotocnik} for a very recent extension up to genus $1501$.

The study of regular maps dates back over a century, to work by Dyck and Burnside and others, although some would say 
centuries further back to the determination of the five Platonic solids (which are viewable as regular maps on the sphere). 
Much of the study and construction of such maps concentrated on cases where $\Aut^+({\cal M})$ is a well known group, 
such as the alternating group $A_5$ (or $A_n$ for some larger values of $n$) or $\PSL(2,q)$ for some prime-power $q$, and more recently, 
even some other non-abelian finite simple groups. 

Here we note that a triangle group $\Delta^+(p,q,r)$ is called {\em spherical\/} if $1/p+1/q+1/r > 1$, in which case 
$(p,q,r)$ is some permutation of $(2,2,n)$ for some $n$, or of $(2,3,3)$ $(2,3,4)$ or $(2,3,5)$, 
or {\em Euclidean\/} if $1/p+1/q+1/r = 1$, in which case $(p,q,r)$ is some permutation of $(2,3,6)$ $(2,4,4)$ or $(3,3,3)$, 
and otherwise $\Delta^+(p,q,r)$ is called {\em hyperbolic\/}.  
Also if $p$, $q$ and $r$ are pairwise coprime, then $\Delta^+(p,q,r)$ is perfect, with trivial abelianisation, 
and in that case every finite quotient of $\Delta^+(p,q,r)$ is insoluble. 
For example, $\Delta^+(2,3,7)$ is perfect, which helps explain why its simple quotients have been widely studied. 

A recent (and somewhat surprising) observation by the first author, however, is that the automorphism groups of well over $90\%$ of orientable-regular maps of genus $2$ to $301$ are soluble (while very few are non-abelian simple).  
Moreover, this does not appear to be a `small case only' phenomenon, as it is also the case for such maps of genus $2$ to $1501$.  

The preponderance of these soluble quotients motivated the study behind this paper, which helps explain it. 
In fact we prove the following.

\begin{theorem}
\label{thm:main}
If $\Delta^+$ is any non-perfect hyperbolic ordinary triangle group, then \\[+3pt]
{\rm (a)} $\Delta^+$ has a smooth finite soluble quotient with derived length at most $3$, \\[+3pt]
{\rm (b)} if $c$ is the minimum derived length of a smooth finite soluble quotient of $\Delta^+$, 
then $\Delta^+$ has infinitely many smooth finite soluble quotients with derived length $d$, for every $d > c$. \\[+3pt]
Hence in particular, $\Delta^+$ has infinitely many smooth finite soluble quotients with derived length $d$, for every integer $d > 3$. 
\end{theorem}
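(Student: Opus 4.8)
The plan is to prove part~(a) by explicit construction and part~(b) by a structural argument; the closing assertion is then immediate, since (a) gives a smooth finite soluble quotient and shows that the minimal derived length $c$ of such a quotient satisfies $c\le 3$, while (b) supplies infinitely many smooth finite soluble quotients of each derived length $d>c$, hence of each $d>3$. For part~(a), write $\Delta^+=\Delta^+(p,q,r)$: since $\Delta^+$ is perfect when $p,q,r$ are pairwise coprime, being non-perfect it has a prime $\ell$ dividing two of $p,q,r$, and as $\Delta^+(p,q,r)\cong\Delta^+(\sigma p,\sigma q,\sigma r)$ for every permutation $\sigma$ we may assume $\ell\mid\gcd(p,q)$. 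I would then exhibit, in a few sub-cases distinguished by arithmetic relations among $\ell,p,q,r$, an explicit finite soluble group $G$ of derived length at most $3$ -- most naturally an extension of a metacyclic group (derived length $\le 2$) by an elementary abelian module, the module being used to repair any discrepancy between $r$ and the order of the image of $xy$ in the abelianisation -- together with an epimorphism $\theta\colon\Delta^+\to G$. The substance is verifying that $\theta$ is \emph{smooth}, i.e.\ that $\theta(x),\theta(y),\theta(xy)$ have \emph{exact} orders $p,q,r$; this is routine but case-heavy, and in many cases one obtains derived length $1$ or $2$.

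For part~(b) I would use two structural facts. If $N$ is the kernel of any smooth finite quotient of $\Delta^+$, then $N$ is a torsion-free subgroup of finite index $[\Delta^+:N]$ in the cocompact Fuchsian group $\Delta^+$, hence the fundamental group of a closed orientable surface of genus $h$, where (by Riemann--Hurwitz) $2h-2=[\Delta^+:N]\bigl(1-\tfrac1p-\tfrac1q-\tfrac1r\bigr)>0$; thus $h\ge 2$, the abelianisation $N/N'\cong\Z^{2h}$ is nonzero, and $N$ is residually a finite $2$-group. Also $\Delta^+$, being hyperbolic, contains a non-abelian free subgroup, so it is non-soluble and $\Delta^{+(i)}\ne 1$ for every $i\ge 0$, where $\Delta^{+(i)}$ denotes the $i$-th derived subgroup. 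Now let $c$ be the minimal derived length of a smooth finite soluble quotient of $\Delta^+$ (well-defined by~(a)), and call a smooth finite soluble quotient of derived length $\le e$ a \emph{level-$e$ quotient}; such quotients exist for every $e\ge c$, e.g.\ any one realising the minimal derived length $c$. The crux of part~(b) is the claim that \emph{for every $e\ge c$ there is a level-$e$ quotient whose kernel $N$ satisfies $\Delta^{+(e)}\not\subseteq N'$.}

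Granting this, fix such an $N$. Then $\Delta^{+(e)}$ has nonzero image in $N/N'\cong\Z^{2h}$, and since a nonzero element of $\Z^{2h}$ survives modulo $n$ for all but finitely many primes $n$, we get $\Delta^{+(e)}\not\subseteq N'N^{n}$ for all but finitely many $n$. For each such $n$ the group $\Gamma_n:=\Delta^+/N'N^{n}$ is a smooth finite soluble quotient of $\Delta^+$ of derived length exactly $e+1$: it is finite of order $[\Delta^+:N]\cdot n^{2h}$; it is smooth since $N'N^{n}\subseteq N$; it has derived length $\le e+1$ since $N/N'N^{n}$ is abelian and $\Delta^+/N$ has derived length $\le e$; and it has derived length $\ge e+1$ since $\Delta^{+(e)}\not\subseteq N'N^{n}$. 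The $\Gamma_n$ have unbounded orders, so there are infinitely many of them; taking $e=d-1$ for an arbitrary $d>c$ gives infinitely many smooth finite soluble quotients of derived length $d$, and since $c\le 3$ by~(a) the same holds for every integer $d>3$.

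It remains to prove the claim, which I expect to be the main obstacle. Fix $e\ge c$ and suppose, for a contradiction, that every level-$e$ quotient has kernel $N$ with $\Delta^{+(e)}\subseteq N'$. Take the kernel $N_0$ of any level-$e$ quotient and put $N_{i+1}:=N_i'N_i^{2}$ for $i\ge 0$. Each $N_i$ has finite index in $\Delta^+$, hence is finitely generated, so each $N_i/N_{i+1}$ is a finite elementary abelian $2$-group and each $\Delta^+/N_i$ is finite; moreover each $\Delta^+/N_{i+1}$ is soluble and smooth, as its kernel lies in $N_0$. Now $\Delta^{+(e)}\subseteq N_0$, so by hypothesis $\Delta^{+(e)}\subseteq N_0'$; and if $\Delta^{+(e)}\subseteq N_i'$ then $\Delta^{+(e)}\subseteq N_i'\subseteq N_{i+1}$, so $\Delta^+/N_{i+1}$ has derived length $\le e$ and is thus a level-$e$ quotient, whence the hypothesis gives $\Delta^{+(e)}\subseteq N_{i+1}'$. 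Therefore $\Delta^{+(e)}\subseteq\bigcap_i N_i$. But $\bigcap_i N_i=1$: given $1\ne k\in N_0$, residual finiteness of $N_0$ as a $2$-group provides a surjection $\phi\colon N_0\to P$ onto a finite $2$-group with $\phi(k)\ne 1$, and since $\phi(N_{i+1})=\phi(N_i)'\,\phi(N_i)^{2}=\Phi\bigl(\phi(N_i)\bigr)$ (the Frattini subgroup of the finite $2$-group $\phi(N_i)$) we get $\phi(N_i)=\Phi^{(i)}(P)$, which is trivial for $i$ large, so $k\notin N_i$ for $i$ large. Hence $\Delta^{+(e)}=1$, contradicting the non-solubility of $\Delta^+$. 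This proves the claim, and with it the theorem.
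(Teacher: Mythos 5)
Your part (a) is not yet a proof, and part (a) is where the real content of this theorem lies. You reduce it to exhibiting, ``in a few sub-cases,'' an explicit finite soluble group of derived length at most $3$ (an elementary abelian module extended by a metacyclic group) together with a smooth epimorphism from $\Delta^+(p,q,r)$, and you describe the verification of smoothness as routine but case-heavy. But no construction is actually given, and it is not argued that a group of the proposed shape with elements of exact orders $p,q,r$ in the right positions exists for every non-perfect hyperbolic triple; that existence statement is precisely the theorem. The paper's proof of (a) requires genuine machinery: the structure of $\Gamma/\Gamma'$ (its Lemma 3.1), Singerman's theorem on signatures of finite-index subgroups, the classification of the eight possible compact signatures of $\Gamma'$ (its Theorem 3.2), and then an eight-case construction of kernels of the form $K_{m'}(K_m(\Gamma'))$, where smoothness is controlled by knowing exactly which elliptic periods survive in $\Gamma'$ and in $K_m(\Gamma')$. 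In particular, in the cases where some period of $\Gamma'$ occurs with multiplicity one (e.g.\ signature $(g;\,m_1)$), the corresponding elliptic generator lies in $\Gamma''$, which is why derived length $2$ is impossible and a three-step kernel is needed; none of this is visible from your sketch, and the obstruction it records is exactly what your ``repair by a module'' step would have to overcome. Since the final assertion of the theorem (infinitely many quotients of every derived length $d>3$) rests on the bound $c\le 3$ from (a), the proposal is incomplete at its core.

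Your part (b), by contrast, is correct and takes a genuinely different route from the paper. The paper iterates the Macbeath trick: the kernel $K$ of a smooth finite soluble quotient is a surface group of genus $g\ge 2$, and $\Gamma/K'K^{(m)}$ is a larger smooth finite soluble quotient, with the derived length asserted to increase by exactly one at each step. You instead prove, for each $e\ge c$, that some level-$e$ kernel $N$ satisfies $\Delta^{+(e)}\not\subseteq N'$ (via the contradiction argument with the series $N_{i+1}=N_i'N_i^{2}$ and residual $2$-finiteness of $N_0$), and then pass to $\Delta^+/N'N^{n}$ for infinitely many $n$; this pins down the derived length exactly $e+1$ more carefully than the paper's ``it is easy to see,'' and the verification of finiteness, smoothness and the two derived-length bounds is sound. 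Two small points: you should cite or prove that a closed orientable surface group of genus at least $2$ is residually a finite $2$-group (residually free by Baumslag, and free groups are residually finite $p$-groups), and you should note explicitly that normality of $N'N^{n}$ in $\Delta^+$ follows since these subgroups are characteristic in the normal subgroup $N$. With those attended to, (b) stands on its own; the theorem as a whole does not, until (a) is actually carried out.
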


This theorem extends one by Sah (see~\cite[Theorem 1.5]{Sah}), showing that every non-perfect hyperbolic Fuchsian group has a smooth soluble quotient of derived length $d$ for any sufficiently large integer $d$, 
and a subsequent one by Zomorrodian, who proved in~\cite{Zomorrodian} that if $\,\Gamma = \Gamma^{(0)} \,\rhd\, \Gamma^{(1)} \,\rhd\, \Gamma^{(2)} \,\rhd\,  \cdots\,$ 
is the derived series of any co-compact non-perfect Fuchsian group $\Gamma$, then $\Gamma^{(c)}$ is a (torsion-free) surface group for some $c \le 4$,  
and applied this to the case of the ordinary $(2,3,n)$ triangle groups, showing that $c \le 3$ for those.
It also extends a very specialised theorem by Chetiya et al  (see~\cite[Theorem 3.1]{ChetiyaDuttaPatra}) concerning ordinary triangle groups 
$\Delta^+(\ell,m,u \ell)$ with $\gcd(\ell,m)=\gcd(u,m)=1$.  
 Our theorem generalises all of these. 
Also a related paper by Gromadzki~\cite{Gromadzki} dealt with 
smooth finite soluble quotients of some particular triangle groups, in obtaining upper bounds on the order of a soluble group 
of conformal automorphisms of a compact Riemann surface of given genus $g > 1$.

In this paper, we will also show that in most cases, the qualification $d > c$ in part (b) can be improved to $d \ge c$, 
and we will give a precise description of the exceptions.

For completeness, we note that (a) the spherical ordinary triangle groups are all finite, and all except $\Delta^+(2,3,5) \cong A_5$ 
are soluble with derived length $1$, $2$ or $3$, while (b) the Euclidean ordinary triangle groups are all infinite and soluble, 
with derived length $2$ (as each has an abelian normal subgroup of rank 2 with cyclic quotient).  
See~\cite[\S 6.4]{CoxeterMoser} for more details about these triangle groups. 

Our proof of Theorem~\ref{thm:main} uses a theorem from 1970 by Singerman \cite{Singerman}, 
which provides a connection between the signatures of a Fuchsian group $\Gamma$ and a subgroup $\Lambda$ 
of finite index in $\Gamma$, via the transitive permutation representation of $\Gamma$ on the (right) cosets of $\Lambda$. 
As the signature encodes some of the group structure, application of Singerman's theorem to the derived series 
of a Fuchsian group is quite fruitful. 

In Section~\ref{sec:Background} we give some further background about Fuchsian groups and their signatures, 
and the Riemann-Hurwitz formula, followed by a statement and some explanation of Singerman's theorem. 
Then in Section~\ref{sec:DerivedDelta} we apply this to determining the signature of the derived group of a non-perfect 
hyperbolic ordinary triangle group, and find there are eight possible cases.  Those cases are then analysed in detail 
in Section~\ref{sec:MainTheoremProof}, resulting in a proof of our main theorem, and finally in Section~\ref{sec:FinalRemarks} 
we make some additional observations and draw some further conclusions that strengthen it.

\section{Further background} 
\label{sec:Background}

A {\em Fuchsian group\/} is a finitely-generated discrete subgroup of $\mathrm{PSL}(2,\R)$. 
Any such group $\Gamma$ has presentation in terms of \\[-22pt] 
\begin{itemize}
    \item $2g$ hyperbolic generators $\{a_1,b_1,\dots,a_g,b_g\}$, \\[-22pt] 
    \item $r$ elliptic generators $\{x_1,x_2,\dots,x_r\}$,  \\[-22pt] 
    \item $s$ parabolic generators $\{p_1,p_2,\dots,p_s\}$, \ and  \\[-22pt] 
    \item $t$ hyperbolic-boundary generators $\{h_1,h_2,\dots,h_t\}$, \\[-22pt] 
\end{itemize}
subject to defining relations: \\[-12pt] 
$$
x_1^{m_1} = x_2^{m_2} = \dots =x_r^{m_r} = \prod_{i=1}^g [a_i,b_i] \cdot \prod_{j=1}^rx_j\cdot\prod_{k=1}^sp_k\cdot\prod_{l=1}^th_l = 1.
$$

The integer $g \ge 0$ is called the {\em genus\/} of $\Gamma$, and the integers $m_1,m_2,\dots,m_r$ (all $\ge 2$) are called the periods of $\Gamma$, 
and we say that $\Gamma$ has Fuchsian {\em signature} $\left(g;\, m_1,m_2,\dots,m_r ;\, s ;\, t  \right)$.
Note for example that an ordinary triangle group $\Delta^+(p,q,r)$ is a Fuchsian group with signature $(0;\, p,q,r;\, 0;\, 0)$.

Next, if $2\pi\mu(\Gamma)$ is the hyperbolic area of a fundamental domain of $\Gamma$, then 
$$
\mu(\Gamma) = 2g-2 +\sum_{i=1}^r \left(1-\frac{1}{m_i}\right) + s + t,  
$$
and if $\Lambda$ is a subgroup of finite index in $\Gamma$, then $\Lambda$ is also a Fuchsian group, 
and the well-known {\em Riemann-Hurwitz formula} gives the index as    
\begin{equation}
|\Gamma:\Lambda| = \frac{\mu(\Lambda)}{\mu(\Gamma)}.
 \label{IRH}
\end{equation}

Singerman's theorem takes this further, as follows:
 
\begin{theorem}{\rm (Singerman \cite{Singerman})}
\label{thm:Singerman}
Let $\Gamma$ be a Fuchsian group with signature $\left(g;\, m_1,\dots,m_r ;\, s ;\, t  \right)$. 
\\[+3pt] 
Then $\Gamma$ contains a subgroup $\Lambda$ of index $N$ with Fuchsian signature 
$$\left( g';\, n_{11},n_{12},\dots,n_{1\rho_1},\dots,n_{r1},n_{r2},\dots,n_{r\rho_r};\, s';\, t'\right)$$ 
if and only if $N = \frac{\mu(\Lambda)}{\mu(\Gamma)}$ and there exists a transitive permutation 
representation $\theta\!:\Gamma \to S_N$ such that \\[+3pt]
{\rm (a)}  the image $x_j^{\ \theta}$ has precisely $\rho_j$ cycles of lengths $\frac{m_j}{n_{j1}},\frac{m_j}{n_{j2}},\dots,\frac{m_j}{n_{j\rho_j}}$, 
       for $1 \le j \le r$, and \\[+3pt]
{\rm (b)}  if $\delta(\gamma)$ denotes the number of cycles of the image $\gamma^{\,\theta}$, for each generator $\gamma$ of $\Gamma$,  then
        $$s'=\sum_{k=1}^s\delta(p_k)  \quad \hbox{and} \quad  t'=\sum_{l=1}^t\delta(h_l).$$
\end{theorem}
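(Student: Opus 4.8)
The plan is to prove both implications at once, through the dictionary between finite-index subgroups of $\Gamma$ and orbifold covers of the quotient $\mathbb{H}/\Gamma$. Recall that $\Gamma$ acts properly discontinuously on the hyperbolic plane $\mathbb{H}$, and the quotient $\mathcal{O} = \mathbb{H}/\Gamma$ is a $2$-orbifold whose topological data is exactly the signature: genus $g$, a cone point of order $m_j$ for each elliptic generator $x_j$, a cusp for each parabolic generator $p_k$, and a boundary component for each hyperbolic-boundary generator $h_l$. A subgroup $\Lambda$ of index $N$ is, up to conjugacy, the stabilizer of a point in a transitive action of $\Gamma$ on a set of size $N$; and conversely every transitive degree-$N$ permutation representation $\theta\!:\Gamma\to S_N$ arises this way, with $\Lambda = \mathrm{Stab}_\theta(1)$ and $\mathbb{H}/\Lambda \to \mathbb{H}/\Gamma$ the associated degree-$N$ orbifold cover. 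I would then read the signature of $\Lambda$ off the ramification data of this cover, which is precisely what the cycle structure of the generators under $\theta$ records.

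First I would treat the elliptic generators. Near the cone point attached to $x_j$ the local group is $\langle x_j\rangle \cong \mathbb{Z}/m_j$, and the fibre of the cover over that point is the coset set $\Lambda\backslash\Gamma$ partitioned into orbits under $\langle x_j^{\,\theta}\rangle$. For the orbit (cycle) through a coset $\Lambda g$, its length $\ell$ is the least positive integer with $gx_j^{\,\ell}g^{-1}\in\Lambda$, so $\ell = [\langle x_j\rangle : \langle x_j\rangle\cap g^{-1}\Lambda g]$ and $\langle x_j\rangle\cap g^{-1}\Lambda g = \langle x_j^{\,\ell}\rangle \cong \mathbb{Z}/(m_j/\ell)$. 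The corresponding point of $\mathbb{H}/\Lambda$ therefore carries an elliptic element of $\Lambda$ of order $m_j/\ell$, i.e.\ a cone point of order $m_j/\ell$, which is genuine when $\ell < m_j$ and unramified when $\ell = m_j$. Writing the cycle length as $m_j/n_{jk}$ makes the period equal to $n_{jk}$, so the $\rho_j$ cycles of $x_j^{\,\theta}$ produce exactly the periods $n_{j1},\dots,n_{j\rho_j}$ (with the standard convention that periods equal to $1$ are suppressed from the signature). This is condition (a).

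Next I would treat the parabolic and hyperbolic-boundary generators. In each case the relevant local group is infinite cyclic, so every finite-index subgroup of it is again infinite cyclic of the same parabolic or hyperbolic type and no cone point can arise. Hence each $\langle p_k^{\,\theta}\rangle$-orbit on the fibre over the $k$-th cusp lifts to a single cusp of $\mathbb{H}/\Lambda$, and each $\langle h_l^{\,\theta}\rangle$-orbit over the $l$-th boundary component lifts to a single boundary component, irrespective of orbit length; summing over cycles gives $s'=\sum_k\delta(p_k)$ and $t'=\sum_l\delta(h_l)$, which is condition (b). With the periods, $s'$ and $t'$ now fixed, the genus enters only through $\mu(\Lambda) = 2g'-2 + \sum_{j,k}(1-1/n_{jk}) + s' + t'$, and the Riemann--Hurwitz formula~(\ref{IRH}) shows that $N = \mu(\Lambda)/\mu(\Gamma)$ is equivalent to $g'$ taking the single value recorded in the claimed signature. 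Both directions follow: given such a $\Lambda$, the coset action yields a $\theta$ satisfying (a), (b) and the index condition, and conversely $\mathrm{Stab}_\theta(1)$ has exactly the stated signature.

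The main obstacle is the local analysis at the cone points: one must verify rigorously that a cycle of length $\ell$ really does give local stabilizer $\langle x_j^{\,\ell}\rangle$ and hence period $m_j/\ell$, and that these local elliptic elements are genuinely the periods of the Fuchsian presentation of $\Lambda$ -- that is, that passing to the finite-index subgroup introduces no further periods and loses none beyond those produced this way. This is where the standard fact that finite-index subgroups of Fuchsian groups are again Fuchsian, together with the completeness of the signature as an orbifold invariant, is needed. A secondary care-point is the bookkeeping of full-length cycles, which contribute to the count $\rho_j$ but correspond to trivial period $1$ and so must be excluded from the actual signature; keeping this convention consistent across both directions of the equivalence is where most of the attention goes.
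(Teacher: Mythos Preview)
The paper does not prove this theorem; it is quoted from Singerman's 1970 paper \cite{Singerman} and used as a black box throughout. There is therefore no in-paper proof to compare against.

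That said, your sketch is a reasonable outline of the standard argument and is close in spirit to Singerman's original, which works via the coset action and the Reidemeister--Schreier process rather than explicitly invoking orbifold language. Your identification of the periods of $\Lambda$ with the local stabilizers $\langle x_j^{\ell}\rangle$ over cycles of length $\ell$, and of cusps/boundary components with cycles of the parabolic/hyperbolic-boundary generators, is exactly the content of the theorem. The point you flag as the main obstacle --- that these local elliptic elements really are a complete set of representatives of the conjugacy classes of maximal finite cyclic subgroups of $\Lambda$, with no extra periods appearing --- is indeed the substantive step, and in Singerman's treatment it is handled by appealing to the known structure of finite-index subgroups of Fuchsian groups (every torsion element of $\Lambda$ is $\Gamma$-conjugate into some $\langle x_j\rangle$, hence $\Lambda$-conjugate to a power of some $g x_j^{\ell} g^{-1}$). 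Your bookkeeping remark about period-$1$ cycles is also correct and matches the convention the paper adopts immediately after stating the theorem.
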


Here in the special case where $\Gamma$ is the ordinary triangle group $\Delta^+(p,q,r)$, 
we have just three periods $m_1,m_2,m_3$, and $s = t = 0$, and it follows that $s' = t' = 0$ 
for every such subgroup $\Lambda$ in that case. 

We will be interested only in normal subgroups of finite index, in which case the image $x_j^{\ \theta}$ 
of each elliptic generator will be semi-regular, with all cycles of the same length $\frac{m_j}{n_j}$, say. 
We can also ignore any (elliptic) periods equal to $1$.

The underlying genus $g'$ of $\Lambda$ is given by the Riemann-Hurwitz formula. 

\begin{example}
\label{ex:239} 
Let $\Gamma$ be the ordinary triangle group $\Delta^+(2,3,9)$, 
and let $\Lambda = \Gamma'=[\Gamma,\Gamma]$, the derived subgroup of $\Gamma$, 
with quotient $\Gamma/\Lambda \cong C_3$. 
We can determine the signature of $\Gamma'$ as follows. 
In the natural permutation representation of $\Gamma$ on the three cosets of $\Gamma'$, the images of the 
generators $x_1$, $x_2$ and $x_3$ are the identity permutation, 
a $3$-cycle and its inverse. 
It follows that $\Gamma'$ has three  elliptic generators of orders $n_{11}, n_{12}$ and $n_{13}$, 
plus one of order $n_{21}$ and one of order $n_{31}$, where 
$$
\frac{m_1}{n_{11}} = \frac{m_1}{n_{12}} = \frac{m_1}{n_{13}} = 1, \quad \frac{m_2}{n_{21}} = 3  
\quad \hbox{and} \quad \frac{m_3}{n_{31}} = 3,
$$ 
so $n_1 = n_{11} = n_{12} = n_{13} = 2$, while $n_2 = n_{21} = 1$ and $n_3 = n_{31} = 3$. 
Next,  $\Gamma$ has genus $0$ and therefore 
$\mu(\Gamma) = -2 + (1-\frac{1}{2}) + (1-\frac{1}{3}) + (1-\frac{1}{9}) =  \frac{1}{18}$, 
and then by the Riemann-Hurwitz formula, the genus $\gamma'$ of  $\Gamma'$ is given by 
$$
\frac{3}{18} = |\Gamma:\Gamma'| \, \mu(\Gamma) = \mu(\Gamma') 
= 2g'-2 + \frac{1}{2} + \frac{1}{2} + \frac{1}{2} + \frac{2}{3} = 2g' + \frac{1}{6}. 
$$
Thus $g' = 0$, and finally $($by ignoring the elliptic generator of order $1)$, we see that  
$\Gamma'$ has signature $(0;\, 2,2,2,3;\, 0;\, 0)$. 
\end{example}

We generalise this example in the next section.

\section{\!\!The derived subgroup of a non-perfect triangle group}
\label{sec:DerivedDelta}

We first prove something that was stated without proof in Lemma 8.3 of~\cite{BridsonConderReid}, 
and corrects some unfortunate minor errors (such as a missing `$/$') in the statement of Lemma 2.1 in~\cite{Conder-DTG}. 

\begin{lemma}
\label{lem:abGamma}
Let $\Gamma$ be the ordinary triangle group $\Delta^+(p,q,r)$. 
Then the abelianisation $\Gamma/\Gamma'$ of $\Gamma$ is $C_e \times C_f$, where 
$e =  \lcm(\gcd(p,q),\gcd(q,r),\gcd(p,r))$ which is the exponent, and $f = \gcd(p,q,r)$,  
and $|\Gamma:\Gamma'| = e{\hskip -0.7pt}f = pqr/\lcm(p,q,r)$. 
Moreover, the images in $\Gamma/\Gamma'$ of the three canonical generators 
$x_1$, $x_2$ and $x_3$ of $\Gamma$ 
have respective orders $\,e_1 = \gcd(p,\lcm(q,r))$,  \ $e_2 = \gcd(q,\lcm(p,r)),\,$ and $\,e_3 = \gcd(r,\lcm(p,q))$, 
making $\Gamma/\Gamma'$ a smooth finite abelian quotient of the ordinary triangle group $\Delta^+(e_1,e_2,e_3)$. 
In particular, $\Gamma = \Delta^+(p,q,r)$ has a smooth finite abelian quotient 
if and only if $p$ divides $\lcm(q,r)$, and $q$ divides $\lcm(p,r)$, and $r$ divides $\lcm(p,q)$.
\end{lemma}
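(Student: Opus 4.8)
The plan is to compute the abelianisation directly from the presentation $\langle x_1, x_2, x_3 \mid x_1^p = x_2^q = x_3^r = x_1 x_2 x_3 = 1\rangle$, using the relation $x_3 = (x_1 x_2)^{-1}$ to eliminate the third generator and work with the $2$-generator presentation $\langle x, y \mid x^p = y^q = (xy)^r = 1\rangle$, where $x = x_1$, $y = x_2$. Abelianising, $\Gamma/\Gamma'$ is the abelian group $A$ generated by images $\bar x$, $\bar y$ subject to $p\bar x = q\bar y = r(\bar x + \bar y) = 0$, i.e.\ the cokernel of the integer matrix $M = \begin{pmatrix} p & 0 \\ 0 & q \\ r & r \end{pmatrix}$ acting on $\Z^2$. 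First I would put $M$ into Smith normal form: the first determinantal divisor is $d_1 = \gcd$ of all entries $= \gcd(p,q,r)$, and the second is $d_2 = (\gcd \text{ of all } 2\times 2 \text{ minors}) / d_1$. The three $2\times 2$ minors are $pq$, $pr$, $qr$, so $\gcd(pq,pr,qr)$; one checks the identity $\gcd(pq,qr,rp) = \lcm(\gcd(p,q),\gcd(q,r),\gcd(r,p)) \cdot \gcd(p,q,r)$ prime-by-prime, which gives $d_2 = \lcm(\gcd(p,q),\gcd(q,r),\gcd(r,p))$. Hence $A \cong C_{d_1} \times C_{d_2} = C_f \times C_e$ with $f = \gcd(p,q,r)$ and $e = \lcm(\gcd(p,q),\gcd(q,r),\gcd(r,p))$, and $f \mid e$ so the exponent is $e$; the index is $ef = d_1 d_2 = \gcd(pq,qr,rp)$, which equals $pqr/\lcm(p,q,r)$ by another elementary prime-valuation check.

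For the orders of the generator images, I would argue locally at each prime $\ell$. Write $p = \ell^a p'$, $q = \ell^b q'$, $r = \ell^c r'$ with $\ell \nmid p'q'r'$. The $\ell$-part of $A$ is generated by $\bar x, \bar y$ with $\ell^a \bar x = \ell^b \bar y = \ell^c(\bar x + \bar y) = 0$ in the $\ell$-primary part. The order of $\bar x$ in this $\ell$-group is $\ell^{\min(a,\,\max(b,c))}$: the relation $\ell^a\bar x = 0$ gives the bound $a$, while combining $\ell^b\bar y = 0$ with $\ell^c(\bar x+\bar y)=0$ gives $\ell^{\max(b,c)}\bar x = 0$, and one shows no smaller power kills $\bar x$ by exhibiting an explicit quotient (e.g.\ map to $C_{\ell^{\min(a,\max(b,c))}}$). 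Multiplying over all primes $\ell$ yields $\mathrm{ord}(\bar x) = \gcd(p, \lcm(q,r)) = e_1$, and symmetrically $\mathrm{ord}(\bar y) = e_2$, $\mathrm{ord}(\overline{xy}) = e_3$. Since $\bar x^{e_1} = \bar y^{e_2} = (\overline{xy})^{e_3} = 1$ and $A$ is a quotient of $\Delta^+(e_1,e_2,e_3)$ generated by these images, $A$ is a smooth finite abelian quotient of $\Delta^+(e_1,e_2,e_3)$ — "smooth" precisely because the generator orders are exactly $e_1, e_2, e_3$.

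For the final biconditional: $\Gamma = \Delta^+(p,q,r)$ has a smooth finite abelian quotient iff some abelian group receives a homomorphism from $\Gamma$ preserving the orders $p, q, r$; since every abelian quotient factors through $A = \Gamma/\Gamma'$, this happens iff the canonical images $\bar x, \bar y, \overline{xy}$ already have orders $p, q, r$ in $A$, i.e.\ iff $e_1 = p$, $e_2 = q$, $e_3 = r$, which by the formulas means $p \mid \lcm(q,r)$, $q \mid \lcm(p,r)$, $r \mid \lcm(p,q)$.

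The main obstacle I anticipate is bookkeeping rather than conceptual: getting the local (prime-by-prime) computation of the generator orders airtight, in particular verifying the lower bounds $\mathrm{ord}(\bar x) \ge \ell^{\min(a,\max(b,c))}$ by producing the right cyclic quotient in each of the cases $b \le c$ and $c \le b$, and doing this uniformly enough to avoid a proliferation of subcases. The two number-theoretic identities — $\gcd(pq,qr,rp) = e f$ and $ef = pqr/\lcm(p,q,r)$ — are routine once one works at the level of $\ell$-adic valuations, using $v_\ell(\lcm) = \max$ and $v_\ell(\gcd) = \min$.
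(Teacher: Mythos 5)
Your proposal is correct and takes essentially the same approach as the paper: both compute the abelianisation directly from the two-generator presentation and settle every claim by comparing prime valuations, with your Smith normal form and determinantal-divisor computation simply making explicit what the paper asserts via its ``largest homocyclic quotient'' and ``largest abelian $k$-quotient'' observations. Your local computation of the generator orders (upper bound from the relations, lower bound via an explicit map onto a cyclic $\ell$-group in the two cases $b\le c$ and $c\le b$) is sound and matches the paper's prime-by-prime identification of the $k$-parts of $e_1$, $e_2$, $e_3$.
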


\begin{proof}
As $\Gamma$ is $2$-generated, its abelianisation $\Gamma/\Gamma'$ has rank $1$ or $2$. Also clearly 
the largest homocyclic abelian quotient of $\Gamma$ is $C_f \times C_f$ where $f = \gcd(p,q,r)$, 
and hence $\Gamma/\Gamma'$ is isomorphic to $C_e \times C_f$ where $e$ is the exponent of  $\Gamma/\Gamma'$.
The rest of the proof now follows by considering for an arbitrary prime $k$ the largest powers of $k$ dividing $p$, $q$ and $r$, say $k^{\alpha}$, $k^{\beta}$ and $k^{\gamma}$. 
Without loss of generality we may suppose $\alpha \le \beta \le \gamma$, as no linear ordering of $p$, $q$ and $r$ is specified, 
and then $C_{k^{\alpha}} \times C_{k^{\beta}}$ is the largest abelian $k$-quotient of $\Gamma$. 
Accordingly $k^{\beta}$ is the $k$-part of both $e$ and $\lcm(\gcd(p,q),\gcd(q,r),\gcd(p,r))$, 
and  $k^{\alpha+\beta}$ is the $k$-part of both $e{\hskip -0.7pt}f$ and $pqr/\lcm(p,q,r)$,  
while $k^{\alpha}$, $k^{\beta}$ and $k^{\beta}$ are the $k$-parts of $e_1$, $e_2$ and $e_3$, respectively, 
and of $\gcd(p,\lcm(q,r))$,  $\gcd(q,\lcm(p,r))$ and $\gcd(r,\lcm(p,q))$, respectively. 
\end{proof}

Next, for  convenience, we introduce two simplified forms of the signature.
If a Fuchsian group $\Gamma$ has signature $(g;\, m_1,\dots,m_1,\, m_2,\dots,m_2,\dots,\,m_r,\dots,m_r;\, 0;\, 0)$ 
with $n_i$ elliptic generators of order $m_i$, for $1 \le i \le r$, and $s = t = 0$, we can first abbreviate 
the signature of $\Gamma$ to $(g;\, m_1^{(n_1)},m_2^{(n_2)},\dots,m_r^{(n_r)})$, 
and then replace each term $m_i^{(1)}$ with $n_i = 1$ by just $m_i$, to obtain what we will call 
the {\em compact signature\/} of $\Gamma$. 
For instance, the compact signature $(g;\, -)$ indicates a Fuchsian group with no non-trivial elliptic generators, 
while in Example~\ref{ex:239}, the signature $(0;\, 2,2,2,3;\, 0;\, 0)$ is abbreviated to $(0;\, 2^{(3)},3^{(1)})$ 
and then its compact form is $(0;\, 2^{(3)},3)$.  

We can now determine all possible compact signatures for the derived subgroup of a non-perfect triangle group. 
\medskip

\begin{theorem}
\label{thm:SigTypes}
Let $\Gamma=\Delta^+(p,q,r)$ be a non-perfect ordinary triangle group. 
Then the compact signature of its derived group $\Gamma'=[\Gamma,\Gamma]$ has one of the eight forms 
listed in the first column of Table~{\rm \ref{table:SigTypes}},  
where the $m_i$ are pairwise coprime, and each $n_i$ is greater than $1$. 
In particular, $\Gamma'$ is not perfect.
\end{theorem}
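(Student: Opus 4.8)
The plan is to run Singerman's theorem (Theorem~\ref{thm:Singerman}) on the quotient map $\Gamma \to \Gamma/\Gamma' \cong C_e \times C_f$ supplied by Lemma~\ref{lem:abGamma}, and then read off the signature of $\Gamma'$ from the cycle structures of the images of $x_1,x_2,x_3$ together with the Riemann--Hurwitz formula. Since $\Gamma'$ is normal, each $x_j^{\,\theta}$ is semiregular, so in the permutation representation on the $N = ef$ cosets of $\Gamma'$ it consists of $N/o_j$ cycles of length $o_j$, where $o_j$ is the order of the image of $x_j$ in $\Gamma/\Gamma'$ --- and by Lemma~\ref{lem:abGamma} that order is $e_j = \gcd(m_j, \lcm(m_k,m_l))$ for $\{j,k,l\} = \{1,2,3\}$. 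Hence $\Gamma'$ acquires $N/e_j$ elliptic generators of period $m_j/e_j$ coming from the $j$-th period, for $j=1,2,3$, and no parabolic or hyperbolic-boundary generators (as noted in the excerpt, $s' = t' = 0$). So the signature of $\Gamma'$ is
$$\left( g';\; (m_1/e_1)^{(N/e_1)},\, (m_2/e_2)^{(N/e_2)},\, (m_3/e_3)^{(N/e_3)} \right),$$
with any period equal to $1$ discarded, and $g'$ determined by $\mu(\Gamma') = N\mu(\Gamma)$.

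Next I would organise the case analysis. For a prime $k$ dividing one of $p,q,r$, let $k^\alpha \le k^\beta \le k^\gamma$ be the $k$-parts of $p,q,r$ in some order; Lemma~\ref{lem:abGamma} tells us the $k$-parts of $e_1,e_2,e_3$ are $k^\alpha, k^\beta, k^\beta$ in the corresponding order. Thus the "reduced period" $m_j/e_j$ has $k$-part $k^{\gamma-\beta}$ for the index $j$ with $k$-part $k^\gamma$, and $k$-part $1$ for the other two indices. Combining over all primes, exactly the period corresponding to the "dominant" prime powers survives as a nontrivial reduced period, and distinct primes contribute to distinct reduced periods unless two of $p,q,r$ share no prime with the third in a conflicting way --- this is exactly what forces the surviving reduced periods $m_i$ (after relabelling) to be \emph{pairwise coprime}. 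The number of distinct nontrivial reduced periods is therefore at most $3$, and non-perfectness ($ef > 1$) guarantees at least one survives. Running through whether $0$, $1$, $2$, or $3$ of the $e_j$ equal the full $m_j$, and within each case whether the corresponding $N/e_j$ exceeds $1$, produces the eight listed forms; one then checks $n_i = N/e_i > 1$ in each case (the only way $N/e_i = 1$ is $e_i = N = ef$, i.e.\ the $i$-th generator's image generates the whole abelianisation, which pins down the other two $e_j$ and collapses their reduced periods, landing in a case with fewer surviving periods). Finally, "in particular $\Gamma'$ is not perfect" is immediate: its signature has $g' \ge 0$ and at least one nontrivial elliptic period (or, if none, $g' \ge 1$ forced by $\mu(\Gamma') > 0$), so the abelianisation of $\Gamma'$ is nontrivial --- a genus-$g'$ surface-with-cone-points group surjects onto $C_{m_i}$ or onto $\Z^{2g'}$.

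The main obstacle I expect is the bookkeeping that pins down \emph{exactly} the eight forms and verifies the pairwise-coprimality and $n_i > 1$ claims simultaneously: one must track, prime by prime, which of the three indices receives each prime's surplus exponent $\gamma - \beta$, confirm no two indices can receive contributions from the "same" prime, and rule out the degenerate subcases where a reduced period collapses to $1$ or a multiplicity collapses to $1$ in a way not already captured. The Riemann--Hurwitz computation of $g'$ in each of the eight cases is routine by comparison --- it is just substituting $\mu(\Gamma') = ef\bigl(1 - \tfrac1p - \tfrac1q - \tfrac1r\bigr)$ and solving the resulting linear equation for $g'$ --- though one should check $g'$ comes out a non-negative integer, which it must since $\Gamma'$ is genuinely a Fuchsian group. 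I would present the prime-by-prime analysis as the core lemma-within-the-proof, then tabulate the eight outcomes, leaving the per-case genus arithmetic to the table itself.
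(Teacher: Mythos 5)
Your setup is essentially the paper's: apply Singerman's theorem to the permutation representation of $\Gamma$ on the $N=e{\hskip -0.7pt}f$ cosets of $\Gamma'$, read off from Lemma~\ref{lem:abGamma} that $x_i$ induces $N/e_i$ cycles of length $e_i$, so that $\Gamma'$ has reduced periods $p/e_1, q/e_2, r/e_3$ with multiplicities $N/e_i$, and prove pairwise coprimality prime by prime. The genuine gap is in the step that actually pins down the eight forms. You assert that the only way to get $n_i=N/e_i=1$ is $e_i=N$, and that this ``pins down the other two $e_j$ and collapses their reduced periods, landing in a case with fewer surviving periods.'' That claim is false, and the table itself refutes it: for $\Delta^+(2,3,8)$ one has $N=2$ and $(e_1,e_2,e_3)=(2,1,2)$, so the reduced period $8/e_3=4$ survives with multiplicity $1$ while $3$ survives with multiplicity $2$, giving $(0;\,4,3^{(2)})$; and for $\Delta^+(2,9,15)$ two nontrivial periods ($3$ and $5$) survive with multiplicity $1$ alongside $2^{(3)}$. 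So multiplicity-one periods do coexist with repeated ones, and your argument would wrongly exclude rows 4, 6 and 7 of the table. What the theorem actually requires---and what your sketch never establishes---is that the two candidates $(g;\,m_1,m_2)$ and $(g;\,m_1,m_2,m_3)$, in which \emph{every} surviving period has multiplicity one, cannot occur. The paper eliminates these by arithmetic with $N=e{\hskip -0.7pt}f$: if all three $n_i=1$, then $N=e_1=e_2=e_3$ divides each of $p,q,r$, hence divides both $e$ and $f$, so $N^2\mid e{\hskip -0.7pt}f=N$ and $N=1$, contradicting non-perfectness; and if (without loss of generality) $n_1=n_2=1$ while $m_3=1$ (so $e_3=r$), then $N\mid\gcd(p,q)\mid e$ forces $N=\gcd(p,q)=e$ and $f=1$, whereas $r=e_3\mid N=\gcd(p,q)$ gives $f=\gcd(p,q,r)=r>1$, a contradiction. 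Some version of these two eliminations must be supplied; your parenthetical does not substitute for them.

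Two smaller slips. First, non-perfectness of $\Gamma$ (i.e.\ $e{\hskip -0.7pt}f>1$) does not guarantee that a nontrivial reduced period survives: $\Delta^+(4,4,4)$ has $\Gamma'$ with compact signature $(3;\,-)$. This is harmless, since $(g;\,-)$ is among the listed forms, but the claim as stated is wrong. Second, your justification that $\Gamma'$ is not perfect (``at least one nontrivial elliptic period, so it surjects onto $C_{m_i}$'') is not valid for an arbitrary signature: a Fuchsian group with signature $(0;\,m_1,m_2,m_3)$ and pairwise coprime periods is perfect. The non-perfectness of $\Gamma'$ only follows \emph{after} the eight forms, with every displayed $n_i>1$, have been established, at which point either $g\ge 1$ or some period is repeated, and then the abelianisation computations of Section~\ref{sec:MainTheoremProof} give a nontrivial quotient.
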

    
\begin{table}[ht]
\begin{center}
\begin{tabular}{|| l | l | l ||} 
\hline
Compact signature of $\Gamma'$  & Example $\Gamma$ & Compact signature of $\Gamma'$ for the example  \\[0.5ex] 
 \hline\hline  
 $\ (g;\, -)$ \rowstretch & $\ \Delta^+(4,4,4)$ & $\ (3; - )$  \\[+2pt] 
 \hline
 $\ (g;\, m_1)$ \rowstretch & $\ \Delta^+(2,3,12)$  & $\ (1;\,2)$  \\
 \hline
 $\ (g;\, m_1^{(n_1)})$ \rowstretch &  $\ \Delta^+(3,3,4)$ & $\ (0;\, 4^{(3)})$  \\
 \hline
 $\ (g;\, m_1,m_2^{(n_2)})$ \rowstretch & $\ \Delta^+(2,3,8)$ & $\ (0;\, 4,3^{(2)})$  \\
 \hline
 $\ (g;\, m_1^{(n_1)},m_2^{(n_2)})$ \rowstretch & $\ \Delta^+(2,4,6)$ & $\ (0;\, 2^{(2)},3^{(2)})$  \\ [1ex] 
 \hline
 $\ (g;\, m_1,m_2,m_3^{(n_3)})$ \rowstretch &  $\ \Delta^+(2,9,15)$ & $\ (0;\, 3,5,2^{(3)})$  \\ [1ex] 
 \hline
 $\ (g;\, m_1,m_2^{(n_2)},m_3^{(n_3)})$ \rowstretch &  $\ \Delta^+(4,9,30)$ & $\ (1;\, 5,2^{(3)},3^{(2)})$  \\ [1ex]    
 \hline
 $\ (g;\, m_1^{(n_1)},m_2^{(n_2)},m_3^{(n_3)})$ \rowstretch \ & $\ \Delta^+(4,6,10)$ &  $\ (0;\, 2^{(2)},3^{(2)},5^{(2)})$  \\ [1ex]    
 \hline
\end{tabular}
\caption{Compact signatures of derived groups of non-perfect triangle groups}
\label{table:SigTypes}
\end{center}
\end{table}

\begin{proof}
First, we let $x_1$, $x_2$ and $x_3$ be  canonical generators for $\Gamma=\Delta^+(p,q,r)$, of orders $p$, $q$ and $r$ respectively, 
with $x_1 x_2 x_3 = 1$.

Next, by Singerman's Theorem (Theorem~\ref{thm:Singerman}), the abbreviated form 
of the signature of $\Gamma'$ must be $(g; -)$, $(g;\, m_1^{(n_1)})$, $(g;\, m_1^{(n_1)},m_2^{(n_2)})$ or  $(g;\, m_1^{(n_1)},m_2^{(n_2)},m_3^{(n_3)})$ 
for some $g \ge 0$ and some $m_1,m_2,m_3 > 1$, and these four possibilities give $1$, $2$, $3$ and $4$ potential candidates 
for the compact signature, respectively.  
We will eliminate the candidates  $(g;\, m_1,m_2)$ and $(g;\, m_1,m_2,m_3)$, leaving only the eight possibilities listed in Table~\ref{table:SigTypes}. 

To do this, we consider the transitive permutation representation of $\Gamma$ on the $N$ cosets of $\Gamma'$, 
where $1 < N = |\Gamma:\Gamma'| = e{\hskip -0.7pt}f = pqr/\lcm(p,q,r)$ as noted earlier. 
By Lemma~\ref{lem:abGamma}, the orders of the images of the permutations induced by $x_1$, $x_2$ and $x_3$ 
are $\,e_1 = \gcd(p,\lcm(q,r))$,  \ $e_2 = \gcd(q,\lcm(p,r)),\,$ and $\,e_3 = \gcd(r,\lcm(p,q)),\,$ 
so the permutation induced by $x_i$ has $n_i = \frac{N}{e_i}$ cycles of length $e_i$ for $1 \le i \le 3$. 
Furthermore, $\,m_1 = \frac{p}{e_1}$, \ $m_2 = \frac{q}{e_2}\,$ and $\,m_3 = \frac{r}{e_3},\,$ to give the abbreviated signature of $\Gamma'$. 

Next, if $k$ is a prime divisor of $m_1 = \frac{p}{e_1} = \frac{p}{\gcd(p,\lcm(q,r))}$, 
then a higher power of $k$ divides $p$ than divides $\lcm(q,r)$, so $k$ cannot divide either $e_2 = \gcd(q,\lcm(p,r))$ or $e_3 = \gcd(r,\lcm(p,q))$, 
and hence $m_1$ is coprime to each of $m_2$ and $m_3$.  
An analogous argument shows that $m_2$ is coprime to $m_1$ and $m_3$, and so the $m_i$ are pairwise coprime.

We now show that the compact signature of $\Gamma'$ cannot be $(g;\, m_1,m_2,m_3)$. 
For assume the contrary.  Then each $n_i = \frac{N}{e_i}$ is $1$, so $N = e_1 = e_2 = e_3$ 
and therefore $N$ divides each of $p$, $q$ and $r$, and hence divides both $e =  \lcm(\gcd(p,q),\gcd(q,r),\gcd(p,r))$ and $f = \gcd(p,q,r)$. 
Thus $N^2$ divides $e{\hskip -0.7pt}f = N$, so $N = 1$, a contradiction. 

Finally we show that $\Gamma'$ cannot have compact signature $(g;\, m_1,m_2)$. For again assume the contrary.
Then without loss of generality, $N = e_1 = e_2$ while $e_3 = r$.  In this case $N$ divides each of $p$ and $q$, 
and hence divides $\gcd(p,q)$ which divides $e$.  Then since $N = e{\hskip -0.7pt}f$ we find that $N = \gcd(p,q) = e$ and so $f = 1$,  On the other hand, $r = e_3$ which divides $N= \gcd(p,q)$, and so $f = \gcd(p,q,r) > 1$, 
another contradiction, completing the proof. 
\end{proof}

We leave it as an exercise for the reader to confirm that the examples in the second column of Table~{\rm \ref{table:SigTypes}} 
give the compact signatures in the third column.

\section{Proof of our main theorem}
\label{sec:MainTheoremProof}

We prove part (a) of Theorem~\ref{thm:main}, followed by part (b).  

First let $\Gamma = \Delta^+(p,q,r)$ be any non-perfect hyperbolic ordinary triangle group.  
We now show that $\Gamma$ has a smooth finite soluble quotient with derived length $1$, $2$ or $3$, 
depending on the compact signature of its derived subgroup $\Gamma'$.
We do this in cases, numbered by the relevant row of Table~{\rm \ref{table:SigTypes}}, 
but not in exactly the given order.  

We will make repeated use of the following fact about an arbitrary group $G$ generated by 
a finite subset of size $n$. 
If $m$ is any positive integer, and $G^{(m)}$ is the characteristic subgroup of $G$ 
generated by the $m\,$th powers of all elements of $G$, then the product $K_m(G) = G' \,G^{(m)}$ 
is also a characteristic subgroup of $G$, with abelian quotient $G/K_m(G)$ being 
a factor group of the direct product $(C_m)^{n}$.  

Here we note (again following a recommendation by a referee) that aspects of Cases (3), (5) and (8) 
could also be dealt with by considering a smooth homomorphism from $\Gamma$ onto a cyclic group 
of order $m_1$, $m_{1}m_{2}$ or $m_{1}m_{2}m_{3}$ respectively, as explained in \cite{Harvey}. 

${}$\\[-24pt]

{\bf Case (1)}:  \ \ $\Gamma'$ has compact signature $(g; -)$.

This case is easy: $\Gamma'$ is torsion-free, and so $\Gamma/\Gamma'$ is a smooth soluble quotient 
with derived length $1$. 

${}$\\[-30pt]

Now we skip to case (3) before returning to case (2).

${}$\\[-24pt]

{\bf Case (3)}:  \ \ $\Gamma'$ has compact signature $(g;\, m_1^{(n_1)})$.

In this case $\Gamma'$ has presentation \\[-18pt] 
$$
\langle\, a_1,b_1,\dots,a_g,b_g, \, x_1, \dots, x_{n_1} \ | \ x_1^{m_1} = \dots = x_{n_1}^{m_1} = x_1\dots x_{n_1} \prod_{i=1}^{2g}\, [a_i,b_i] = 1 \,\rangle. 
$$
Abelianisation gives $\Gamma'/\Gamma'' \cong \Z^{2g} \times (C_{m_1})^{n_1-1}$, 
and then if $m = m_1$ (or any integer multiple of $m_1$), then reduction mod $m$ gives  
$\Gamma'/K_{m}(\Gamma') \cong (C_{m})^{2g} \times (C_{m_1})^{n_1-1},$
and then since $K_{m}(\Gamma')$ contains no element $x_i^{\,h}$ for $1 \le h < m_1$, 
it follows that $K_{m}(\Gamma')$ is torsion-free. 
Also $K_{m}(\Gamma')$ is characteristic in $\Gamma'$ and hence normal in $\Gamma$, 
and therefore $\Gamma/K_{m}(\Gamma')$ is a smooth finite soluble quotient of $\Gamma$, with derived length $2$.

${}$\\[-24pt]

{\bf Case (2)}:  \ \ $\Gamma'$ has compact signature $(g;\, m_1)$.

Here $g\geq 1$, for otherwise $\Gamma'$ and hence $\Gamma$ would be finite. 
In this case $\Gamma'$ has presentation \\[-12pt] 
$$
\langle\, a_1,b_1,\dots,a_g,b_g, \, x_1 \ | \ x_1^{m_1} = x_1 \prod_{i=1}^{2g}\, [a_i,b_i] = 1 \,\rangle,  
$$
with $\Gamma'/\Gamma''\cong \Z^{2g}$, and as $x_1 = \left (\prod_{i=1}^{g}[a_i,b_i] \right )^{-1} \in \Gamma''$, 
we find that $x_1$ cannot have order $m_1$ in any quotient of $\Gamma'/\Gamma''$, 
and hence $\Gamma$ has no smooth quotient with derived length $2$. 

But now for any integer $m > 1$ coprime to $m_1$, we can apply Singerman's theorem to the finite-index subgroup $K_{m}(\Gamma')$ of $\Gamma'$. 
As $x_1 \in \Gamma''  \le  K_m(\Gamma')$, we see that $\Gamma'/K_{m}(\Gamma') \cong (C_{m})^{2g}$, 
with $x_1$ inducing the identity permutation. 
Hence $K_{m}(\Gamma')$ has compact signature $(g';\,  m_1^{(N)})$ for some $g'$, with $N = m^{\,2g}$. 
This can be handled in the same way as in case (3), by letting $m' = m_1$ (or any integer multiple of $m_1$) 
and then taking $L=K_{m'}(K_{m}(\Gamma'))$ which is characteristic 
in $K_{m}(\Gamma')$ and hence normal in $\Gamma$, and gives a smooth finite soluble quotient $\Gamma/L$ with derived length $3$.

${}$\\[-24pt]

{\bf Case (4)}:  \ \ $\Gamma'$ has compact signature $(g;\, m_1,m_2^{(n_2)})$.

In this case $\Gamma'$ has presentation \\[-15pt] 
$$
\langle\, a_1,b_1,\dots,a_g,b_g, \, x_1,\,  y_1, \dots, y_{n_2}  \ | \  x_1^{m_1}=y_1^{m_2}=\dots=y_{n_2}^{m_2} = x_1\,y_1 \dots y_{n_2}\,\prod_{i=1}^{2g}\, [a_i,b_i]=1 \,\rangle.
$$
Here $x_1 = \left (y_1\dots y_{n_2}\,\prod_{i=1}^{g}[a_i,b_i] \right )^{-1}$, so $\Gamma'$ is generated by $a_1,b_1,\dots,a_g,b_g, \,  y_1, \dots, y_{n_2}$, 
and therefore $\Gamma'/\Gamma''$ is isomorphic to $\Z^{2g} \times A$ 
for some quotient $A$ of $(C_{m_2})^{n_2}$.  In particular, this abelian group contains 
no element with order a non-trivial divisor of $m_1$, because $\gcd(m_1,m_2) = 1$ and $\Z^{2g}$ is torsion-free, and it follows that $x_1 \in \Gamma''$,
and therefore $\Gamma$ has no smooth quotient with derived length $2$. 

Now consider the natural presentation of the quotient $\Gamma'/\Gamma''$ in terms of the images of its  generators. 
Letting $\overline{w}$ be the image of any element $w$ of $\Gamma'$, 
we note that  $\overline{x_1} = 1$ (because $x_i \in \Gamma''$), 
and that $\prod_{i=1}^{2g}\, [\overline{a_i},\overline{b_i}] = 1$ (because $\Gamma'/\Gamma''$ is abelian), 
and then since $x_1\,y_1 \dots y_{n_2}\,\prod_{i=1}^{2g}\, [a_i,b_i]=1$ 
it follows that $\overline{y_1 \dots y_{n_2}} = 1$, 
so $\overline{y_{n_2}} = (\overline{y_1}\dots \overline{y_{n_2-1}})^{-1}$, 
and therefore $\Gamma'/\Gamma'' \cong \Z^{2g} \times (C_{m_2})^{n_2-1}$.  

Next, if $m = m_2$ (or any integer multiple of $m_2$ coprime to $m_1$), then reduction mod $m$ gives $\Gamma'/K_{m}(\Gamma')\cong (C_{m})^{2g} \times (C_{m_2})^{n_2-1}$, 
and then application of Singerman's theorem shows that the finite index subgroup $K_{m}(\Gamma')$ has compact signature $(g';\, m_1^{(N)})$ 
for some $g'$, with $N = m^{\,2g}m_2^{\,n_2-1}$. 
This can be handled in a similar way to case (2),  by letting $m' = m_1$ (or any integer multiple of $m_1$) 
and then taking $L=K_{m'}(K_{m}(\Gamma'))$ which is characteristic in $K_{m}(\Gamma')$ and hence normal in $\Gamma$, 
and therefore gives a smooth finite soluble quotient $\Gamma/L$ with derived length $3$.

${}$\\[-24pt]

{\bf Case (5)}:  \ \ $\Gamma'$ has compact signature $(g;\, m_1^{(n_1)},m_2^{(n_2)})$. 

Here $\Gamma'$ has presentation \\[+6pt] 
${}$ \quad 
$\langle\, a_1,b_1,\dots,a_g,b_g, \, x_1, \dots, x_{n_1},\,  y_1, \dots, y_{n_2}  \ | \  x_1^{m_1}=\dots=x_{n_1}^{m_1}=y_1^{m_2}=\dots=y_{n_2}^{m_2}$ \\[+3pt]
${}$ \hskip 3.3in $\displaystyle = x_1\dots x_{n_1}\,y_1\dots y_{n_2}\,\prod_{i=1}^{2g}\, [a_i,b_i]=1 \,\rangle,$
\\[+6pt]
and this time we find that $\Gamma'/\Gamma''$ is generated by $2g$ elements of infinite order, 
plus $n_1$ elements of order $m_1$, and $n_2$ elements of order $m_2$, whose product is the identity element.  
Letting $\overline u$, $\overline v$ and $\overline w$ be the images in $\Gamma'/\Gamma''$ of $u = x_1 \dots x_{n_1},\,$ 
$v = y_1 \dots y_{n_2}$ and $w = \prod_{i=1}^{2g}\, [a_i,b_i]$, with $uvw = 1$, we note that $\overline u \overline v$ has order $m_1 m_2$ while $\overline w$ is trivial, 
and so $\overline u \overline v = \overline u \overline v \overline w = 1$, 
and then since $\overline u$ and $\overline v$ have coprime finite orders $m_1$ and $m_2$, also $\overline u = \overline v = 1$.  
Hence the images of $x_{n_1}$ and $y_{n_2}$ are the inverses of the images of $x_1 \dots x_{n_1-1}$ and $y_1 \dots y_{n_2-1}$, 
respectively, and it follows that $\Gamma'/\Gamma'' \cong \Z^{2g} \times (C_{m_1})^{n_1-1} \times (C_{m_2})^{n_2-1}$.  

Now taking $m = m_1 m_2$ (or any integer multiple of $m_1 m_2$), we find that $\Gamma'/K_m(\Gamma')\cong  (C_m)^{2g} \times (C_{m_1})^{n_1-1} \times (C_{m_2})^{n_2-1}$, 
and therefore $\Gamma/K_m(\Gamma')$ is a smooth finite soluble quotient of $\Gamma$ with derived length $2$.

${}$\\[-24pt]

{\bf Case (6)}:  \ \ $\Gamma'$ has compact signature $(g;\, m_1,m_2,m_3^{(n_3)})$.

The argument in this case is similar to the one in cases (4) and (5).  
Here  $\Gamma'$ is generated by the $2g+2+n_3$ elements  $a_1,b_1,\dots,a_g,b_g, \, x_1,\, y_1,\, z_1, \dots, z_{n_3}$
subject to the defining relations \\[-12pt] 
$$
x_1^{m_1} = y_1^{m_2} = z_1^{m_3} = \dots = z_{n_3}^{m_3} 
= x_1\,y_1\,z_1 \dots z_{n_3}\,\prod_{i=1}^{2g}\, [a_i,b_i]=1, 
$$
and since $m_1,m_2$ and $m_3$ are pairwise coprime we find that the images of $x_1$ and $y_1$ in $\Gamma'/\Gamma''$ are trivial, 
so $\Gamma$ has no smooth quotient with derived length $2$. 
But also the image of $z_{n_3}$ is equal to the inverse of the image of $z_1 \dots z_{n_3-1}$, 
and so $\Gamma'/\Gamma'' \cong \Z^{2g} \times (C_{m_3})^{n_3-1}$.

Now if $m = m_3$ (or any multiple of $m_3$ coprime to both $m_1$ and $m_2$), then $\Gamma'/K_{m}(\Gamma')\cong  (C_{m})^{2g} \times (C_{m_3})^{n_3-1}$,  
and application of Singerman's theorem tells us that $K_{m}(\Gamma')$ has compact signature $(g';\, m_1^{(N)},m_2^{(N)})$ for some $g'$, with $N = m^{\,2g} m_3^{\,n_3-1}$. 
Hence by letting $m' = m_1 m_2$ (or any integer multiple of $m_1 m_2$) 
and then taking $L=K_{m'}(K_{m}(\Gamma'))$, we obtain a smooth finite soluble quotient $\Gamma/L$ of $\Gamma$ with derived length $3$.

${}$\\[-24pt]

{\bf Case (7)}:  \ \ $\Gamma'$ has compact signature $(g;\, m_1,m_2^{(n_2)},m_3^{(n_3)})$.

Again this case is similar to earlier ones. 
Here  $\Gamma'$ is generated by the $2g+1+n_2+n_3$ elements  $a_1,b_1,\dots,a_g,b_g, \, x_1,\, y_1,\dots, y_{n_2},\, z_1, \dots, z_{n_3}$
subject to defining relations \\[-12pt] 
$$
x_1^{m_1} = y_1^{m_2} = \dots = y_{n_2}^{m_2} = z_1^{m_3} = \dots = z_{n_3}^{m_3} 
= x_1\,y_1 \dots y_{n_2} \,z_1 \dots z_{n_3}\,\prod_{i=1}^{2g}\, [a_i,b_i]=1, 
$$
and we find that the image of $x_1$  in $\Gamma'/\Gamma''$ is trivial, so $\Gamma$ has no smooth quotient with derived length $2$. 
But also the images of $y_{n_2}$ and $z_{n_3}$ are equal to the inverses of the images of $y_1 \dots y_{n_2-1}$ and $z_1 \dots z_{n_3-1}$ respectively, 
and so $\Gamma'/\Gamma'' \cong \Z^{2g} \times (C_{m_2})^{n_2-1} \times (C_{m_3})^{n_3-1}$.  

It follows that if $m = m_2 m_3$ (or any multiple of $m_2 m_3$ coprime to $m_1$), 
then $\Gamma'/K_{m}(\Gamma')\cong  (C_{m})^{2g} \times  (C_{m_2})^{n_2-1} \times (C_{m_3})^{n_3-1}$, 
and application of Singerman's theorem tells us that $K_{m}(\Gamma')$ has compact signature $(g';\, m_1^{(N)})$ for some $g'$, with $N = m^{\,2g} m_2^{\,n_2-1}\, m_3^{\,n3-1}$.  
Hence by letting $m' = m_1$ (or any integer multiple of $m_1$) and then taking $L=K_{m'}(K_{m}(\Gamma'))$, 
we obtain a smooth finite soluble quotient $\Gamma/L$ of $\Gamma$ with derived length $3$.

${}$\\[-24pt]

{\bf Case (8)}:  \ \ $\Gamma'$ has compact signature $(g;\, m_1^{(n_1)},m_2^{(n_2)},m_3^{(n_3)})$.

This final case is similar to cases (3) and (5).  Here  $\Gamma'$ is generated by the $2g+n_1+n_2+n_3$ elements  
$a_1,b_1,\dots,a_g,b_g, \, x_1,\dots,x_{n_1},\, y_1,\dots,y_{n_2},\, z_1, \dots, z_{n_3}$, 
subject to defining relations \\[-2pt] 
$$ {}\hskip 3pt 
x_1^{m_1} \!= \dots =\! x_{n_1}^{m_1} = y_1^{m_2} \!= \dots =\! y_{n_2}^{m_2} = z_1^{m_3} \!= \dots =\! z_{n_3}^{m_3} 
$$
$$
= x_1 \dots x_{n_1} \,y_1 \dots y_{n_2} \,z_1 \dots z_{n_3}\,\prod_{i=1}^{2g}\, [a_i,b_i]=1,  
$$
and then $\Gamma'/\Gamma'' \cong \Z^{2g} \times (C_{m_1})^{n_1-1} \times (C_{m_2})^{n_2-1} \times (C_{m_3})^{n_3-1}$ 
by similar arguments to those used earlier.  
Hence for any multiple $m$ of $m_1 m_2 m_3$, the group $\Gamma/K_{m}(\Gamma')$ is a smooth finite soluble quotient of $\Gamma$ with derived length $2$.

This completes the proof of part (a) of our main theorem. 

\medskip

We now proceed to prove part (b), namely that if $c$ is the minimum derived length of smooth finite soluble quotient of $\Delta^+$, 
then $\Delta^+$ has infinitely many smooth finite soluble quotients with derived length $d$, for every integer $d > c$. 

To do this, we make use of the so-called `Macbeath trick', which was first introduced in~\cite{Macbeath1961}, and goes as follows. 

Suppose the finite group $G$ is a smooth finite quotient of the hyperbolic ordinary triangle group $\Gamma = \Delta^+(p,q,r)$. 
Then $G$ is a group of conformal automorphisms of a compact Riemann surface $S$ of genus $g$, where \\[-12pt] 
$$2-2g = |G|\left(\frac{1}{p}+\frac{1}{q}+\frac{1}{r}-1\right),$$  
in accordance with the Riemann-Hurwitz formula.
The kernel $K$ of the corresponding smooth homomorphism from $\Gamma$ onto $G$ is the fundamental group
of the surface $S,$ and is itself a Fuchsian group, with signature $(g; - )$.  
In particular, if $g \ge 1$ then $K$ is generated by $2g$ elements $a_1, b_1,\ldots, a_g, b_g$ subject to a single defining relation 
$[a_1,b_1] \ldots [a_g,b_g] = 1$. 

Now for any positive integer $m$, the subgroup $K_m(K)$ is characteristic in $K$ and hence normal in $\Gamma$, 
and the quotient $\Gamma/K_m(K)$ is then isomorphic to an extension by $G$ of a normal subgroup $K/K_m(K) \cong (C_m)^{2g}$ of rank $2g$ 
and exponent $m$. 
In particular, $\Gamma/K_m(K)$ is finite and smooth, and if $G \cong \Gamma/K$ is soluble, then so is $\Gamma/K_m(K)$. 

Moreover, $\Gamma/K_m(K)$ is a group of conformal automorphisms of a compact Riemann surface $S$ of genus $g'$, where \\[-12pt] 
$$2-2g' = |\Gamma/K_m(K)|\left(\frac{1}{p}+\frac{1}{q}+\frac{1}{r}-1\right)$$  
and as  $ |\Gamma/K_m(K)| = m^{2g}|G| > |G|$ while $\frac{1}{p}+\frac{1}{q}+\frac{1}{r}-1 < 0$, it follows that $2-2g > 2-2g'$ and hence $g' > g$.
\smallskip

So now construct an infinite sequence $G = G_0, G_1, G_2, \dots$, where each $G_i$ is a finite smooth quotient of $\Gamma$ 
obtained from the previous one by the Macbeath trick, with $G_i$ containing a finite abelian normal subgroup $N_i$ of rank $2g_{i-1}$,  
exponent $m_i$ and order $m_i^{\,2g_{i-1}}$, such that $G_i/N_i \cong G_{i-1}$. 
Then since $g = g_1 < g_2 < \cdots$, it is easy to see that when $G_{i-1}$ is soluble with derived length $d$, 
the new smooth quotient $G_i$ is soluble with derived length $d+1$. 

Finally, because there are infinitely many possible choices for each $m_i$, it follows that we have infinitely many smooth finite quotients of $\Gamma$ 
with derived length $d$, for every $d$ greater than the derived length of $G$.  

This completes part (b). 

The final claim in Theorem~\ref{thm:main}, namely that  $\Gamma = \Delta^+(p,q,r)$ has infinitely many 
smooth finite soluble quotients with derived length $d$, for every integer $d > 3$, is an immediate consequence of parts (a) and (b). 
\hfill $\square$

\section{Final remarks}
\label{sec:FinalRemarks}

It is easy to see that a smooth finite soluble quotient with derived length $1$ 
(or equivalently, a smooth finite abelian quotient) 
exists only in Case (1), where $\Gamma'$ has compact signature $(g; -)$ for some $g > 0$.
This occurs if and only if each of the triangle group parameters $p,q$ and $r$ divides the least common multiple of the other two, 
by Lemma~\ref{lem:abGamma}. 

Also we see that a smooth finite soluble but non-abelian quotient with derived length $2$ exists only in Cases (3), (5) and (8), 
which are precisely the cases having the property that each $m_i > 1$ in the unabbreviated signature of $\Gamma'$ occurs with multiplicity $n_i > 1$. 

In the other four cases, some $m_i > 1$ in the signature of $\Gamma'$ occurs with multiplicity $n_i = 1$, 
and then the image of the corresponding elliptic generator of $\Gamma'$ lies in $\Gamma''$, 
and hence the minimum derived length of a a smooth finite soluble quotient of $\Gamma$  is $3$.  
\bigskip

Next, in some cases part (b) of Theorem~\ref{thm:main} can be improved, to say there are infinitely many smooth finite soluble 
quotients with derived length $d$, for every $d \ge c$, rather than just for every $d > c$.

In particular, suppose the compact signature of some term $\Gamma_i$ of a normal series for $\Gamma$ used in the proof of part (a) 
to find a smooth finite soluble quotient with derived length $c$ has positive genus parameter ($g$ or $g'$).
Then there are 
infinitely many possibilities for the exponent of the corresponding factor $\Gamma_i/\Gamma_{i+1}$ of the series, 
and these give infinitely many possibilities for the smooth finite quotient of derived length $c$. 

More specifically, we have the following:

\begin{theorem}
Let $\Gamma = \Delta^+(p,q,r)$ be a non-perfect hyperbolic ordinary triangle group, with the signature of $\Gamma'$ 
as given in Table~{\rm \ref{table:SigTypes}}, 
and let $c$ be the minimum derived length of a smooth finite soluble quotient of $\Gamma$. 
Then the number of  smooth finite soluble quotients of $\Gamma$ with derived length equal to $c$ is 
\\[-24pt] 
\begin{itemize}
\item[{\rm (a)}] finite in Case $(1);$  \\[-21pt] 
\item[{\rm (b)}] infinite in Cases $(2)$ and $(7);$ \\[-21pt] 
\item[{\rm (c)}] finite in Cases $(3)$, $(5)$ and $(8)$ when $g = 0$, but infinite when $g > 0;$ \\[-21pt] 
\item[{\rm (d)}] finite in Cases $(4)$ and $(6)$ when there is some permutation of $p$, $q$ and $r$ such that 
$r$ is coprime to both $p$ and $q$, and $p \ne q$, and either {\rm (i)} $\gcd(p,q) = 2$, or {\rm (ii)} $r = 2$ and  $\gcd(p,q) = 3$, 
but otherwise infinite.  
\end{itemize}
\end {theorem}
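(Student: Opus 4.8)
The plan is to reduce the question to counting torsion-free normal subgroups of finite index, and then to a genus computation. A smooth finite soluble quotient of $\Gamma$ with derived length exactly $c$ is precisely a quotient $\Gamma/N$ in which $N$ is a torsion-free normal subgroup of finite index with $\Gamma^{(c)}\le N$ but $\Gamma^{(c-1)}\not\le N$; so if $|\Gamma:\Gamma^{(c)}|$ is finite there are only finitely many such $N$, while if $|\Gamma:\Gamma^{(c)}|$ is infinite I would exhibit an explicit infinite family. Since the abelianisation of a Fuchsian group with signature $(g_i;\dots)$ is $\Z^{2g_i}\times(\text{finite})$, the term $\Gamma^{(j)}/\Gamma^{(j+1)}$ is infinite exactly when the genus parameter $g_j$ of $\Gamma^{(j)}$ is positive, and hence $|\Gamma:\Gamma^{(c)}|$ is finite if and only if the genus parameters of $\Gamma^{(0)},\dots,\Gamma^{(c-1)}$ all vanish. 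Using the values of $c$ from the Final Remarks, Case~$(1)$ ($c=1$) is immediate, since $\Gamma/\Gamma'$ is always finite, which gives~(a); and in Cases~$(3)$, $(5)$ and~$(8)$ ($c=2$) the abelianisation $\Gamma'/\Gamma''$ computed in the proof of part~(a) is a finite product of cyclic groups when the genus $g$ of $\Gamma'$ is $0$ but is infinite when $g>0$, so that $|\Gamma:\Gamma''|$ is finite exactly when $g=0$, which gives~(c).

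For the remaining Cases~$(2)$, $(4)$, $(6)$ and~$(7)$, where $c=3$, the index $|\Gamma:\Gamma'''|$ is finite precisely when the genus $g$ of $\Gamma'$ and the genus $g'$ of $\Gamma''$ both vanish (in which case the abelianisations of $\Gamma$, $\Gamma'$ and $\Gamma''$ are all finite). So the task is to compute $g$ and $g'$ by the Riemann--Hurwitz formula via Singerman's theorem and translate the outcome into arithmetic conditions on $p$, $q$ and $r$. A convenient preliminary, derived from Lemma~\ref{lem:abGamma}, is that the permutation induced by a canonical generator of $\Gamma$ on the $N$ cosets of $\Gamma'$ has as many cycles as the $\gcd$ of the other two triangle-group parameters, which then determines the multiplicities $n_i$. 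In Case~$(2)$ one has $g\ge 1$ automatically, so $|\Gamma:\Gamma'''|$ is infinite. In Case~$(7)$ with $g=0$, Riemann--Hurwitz gives $2g'=N'\bigl(n_2(1-\tfrac1{m_2})+n_3(1-\tfrac1{m_3})-2\bigr)+2$ with $N'=|\Gamma':\Gamma''|>0$, and the bracketed quantity is non-negative and vanishes only if $m_2=m_3=2$, which is impossible as $m_2$ and $m_3$ are coprime, so $g'\ge 1$. Hence Cases~$(2)$ and~$(7)$ are always infinite, which gives~(b).

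For~(d), in Case~$(6)$ the labelling for which $r$ is coprime to both $p$ and $q$ satisfies $N=\gcd(p,q)=n_3$ and $m_3=r$, so $g=0$ always, while the analogous Riemann--Hurwitz identity gives $g'=0$ precisely when $n_3=2$, or when $n_3=3$ and $m_3=2$, which is exactly the stated condition. In Case~$(4)$ exactly one reduced period equals $1$, say the one coming from the parameter $s$, so that $s$ divides $\lcm(t,u)$, where $t$ and $u$ denote the other two parameters labelled so that the reduced period from $t$ has multiplicity one; since that multiplicity equals $\gcd(s,u)$ we get $\gcd(s,u)=1$, and combining this with $s\mid\lcm(t,u)$ forces $s\mid t$, whence $n_2=\gcd(s,t)=s$ and $N=s\,\gcd(t,u)$. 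It follows that $g=0$ if and only if $\gcd(t,u)=1$, in which case also $m_2=u$; so Case~$(4)$ is finite exactly when $\gcd(t,u)=1$ and either $s=2$ or ($s=3$ and $u=2$). Taking the permutation of $p$, $q$, $r$ with $r=u$ and $\{p,q\}=\{s,t\}$ then identifies this with the condition in~(d), and one checks that when $g>0$ no permutation of $p$, $q$, $r$ satisfies its coprimality requirement.

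Finally, in every subcase where some genus parameter is positive, the required infinite family is produced exactly as in the proofs of parts~(a) and~(b): one repeats the construction of a normal subgroup $L$ of $\Gamma$, applying $K_m(-)$ (and, where the construction calls for it, a further $K_{m'}(-)$) to the relevant term of the series, but now lets the exponent attached to the positive-genus term range over infinitely many admissible values, so that $|\Gamma:L|$ is unbounded. Since $L$ is the bottom term of a normal series $\Gamma\rhd\Gamma'\rhd\cdots\rhd L$ of length $c$ with abelian factors, the quotient $\Gamma/L$ is soluble of derived length at most $c$, hence exactly $c$ by minimality of $c$; so each $\Gamma/L$ is a smooth finite soluble quotient of derived length $c$, and distinct values of $|\Gamma:L|$ give non-isomorphic quotients. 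I expect the main obstacle to be the arithmetic bookkeeping in Case~$(4)$: extracting the divisibility $s\mid t$, pinning down which of $p$, $q$, $r$ plays the role of~``$r$'' in condition~(d), and confirming that the coprimality hypothesis there excludes exactly the subcases with $g>0$. Everything else is a routine Riemann--Hurwitz calculation.
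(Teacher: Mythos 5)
Your proposal is correct, and its skeleton matches the paper's: finiteness of the count is reduced to finiteness of the relevant term of a normal series (equivalently, vanishing of the genus parameters computed via Singerman's theorem and Riemann--Hurwitz), while infinitude comes from letting the exponent attached to a positive-genus term range over infinitely many values, exactly as in the constructions used for Theorem~\ref{thm:main}. Where you genuinely differ is in the arithmetic case analysis for Cases (4), (6) and (7). The paper works with $K_m(\Gamma')$ and $K_{m'}(K_m(\Gamma'))$, derives $\frac{1}{e_1}+\frac{1}{e_2}+\frac{1}{e_3}>1$ when $g=0$, and then uses Lemma~\ref{lem:abGamma} to force $(e_1,e_2,e_3)$ to be $(2,2,2)$ or a permutation of $(1,k,k)$, which in particular rules out $g=0$ in Case (7); you instead work directly with the derived series (legitimate, since when $g=0$ the exponent of $\Gamma'/\Gamma''$ equals the paper's chosen $m$, so $K_m(\Gamma')=\Gamma''$), dispose of Case (7) by a direct positivity estimate on the genus of $\Gamma''$, and translate the genus conditions via the identity that the multiplicity $n_i$ equals the gcd of the other two parameters (which does follow from Lemma~\ref{lem:abGamma}, e.g.\ $n/e_1=\bigl(pqr/\lcm(p,q,r)\bigr)/\gcd(p,\lcm(q,r))=\gcd(q,r)$) together with elementary divisibility: $\gcd(s,u)=1$, hence $s\mid t$, $N=s\gcd(t,u)$, and $2g=(s-1)(\gcd(t,u)-1)$. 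I checked these identities and your Riemann--Hurwitz computations; your condition $m_3^{\,n_3-2}(2m_3-n_3m_3+n_3)=2$ is the paper's $r^{\,n-2}(2r-nr+n)=2$, and both routes land on exactly the conditions in the statement. One point you should make explicit: the assertion that in Case (6) one may ``always'' label so that $r$ is coprime to both $p$ and $q$ (whence $g=0$ always) is true but not obvious; it needs the one-line deduction that the two multiplicity-one periods force two of the pairwise gcds to equal $1$, via your preliminary identity. The paper never needs this fact, since it simply splits on $g>0$ versus $g=0$; your route yields it as a pleasant by-product, and with that line spelled out your argument is complete.
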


The situation is summarised in Table~\ref{table:SigTypeLengths}. 
\smallskip

\begin{table}[ht]
\begin{center}
\begin{tabular}{|| l | c | c ||} 
 \hline
Compact signature of $\Gamma'$  & $\ \ c \ \ $ & Infinitely many with DL $= c$?\\[0.5ex] 
 \hline\hline  
 $\ (g; -)$ \rowstretch & $1$ &  Never \\[+2pt] 
 \hline
 $\ (g;\, m_1)$ \rowstretch & $3$  & Always \\
 \hline
 $\ (g;\, m_1^{(n_1)})$ \rowstretch &  $2$ & Whenever $g > 0$ \\
 \hline
 $\ (g;\, m_1,m_2^{(n_2)})$ \rowstretch & $3$ & Depends on $p$, $q$ and $r$   \\
 \hline
 $\ (g;\, m_1^{(n_1)},m_2^{(n_2)})$ \rowstretch & $2$ & Whenever $g > 0$\\ [1ex] 
 \hline
 $\ (g;\, m_1,m_2,m_3^{(n_3)})$ \rowstretch &  $3$  & Depends on $p$, $q$ and $r$ \\ [1ex] 
 \hline
 $\ (g;\, m_1,m_2^{(n_2)},m_3^{(n_3)})$ \rowstretch &  $3$ & Always \\ [1ex]    
 \hline
 $\ (g;\, m_1^{(n_1)},m_2^{(n_2)},m_3^{(n_3)})$ \rowstretch \ & $2$ &  Whenever $g > 0$ \\ [1ex]    
 \hline
\end{tabular}
\caption{Minimum derived length $c$ of a smooth finite soluble quotient}
\label{table:SigTypeLengths}
\end{center}
${}$\\[-36pt]
\end{table}

\begin{proof}
In Case (1), where $c = 1$, every smooth finite soluble quotient with derived length $c$ 
is a quotient of the finite abelianisation of $\Gamma$, so there are only finitely many such quotients 
(and sometimes only one). 

On the other hand, in Case (2) there are always infinitely many smooth finite soluble quotients with derived length $c = 3$, 
because  $\Gamma'$ has compact signature $(g;\, m_1)$ with $g \ge 1$, and then for any integer $m > 1$ coprime to $m_1$, 
we showed in the proof of Theorem~\ref{thm:main}  that $\Gamma/(K_{m_1}(K_{m}(\Gamma'))$ is a smooth finite soluble 
quotient of $\Gamma$ with derived length $3$ and order divisible by $|\Gamma'/K_{m}(\Gamma')| = m^{2g}$,  
so the order of $\Gamma/(K_{m_1}(K_{m}(\Gamma'))$ is unbounded. 

In Cases (3), (5) and (8), there exist infinitely many smooth finite soluble quotients with derived length $c = 2$  
when $g > 0$, but otherwise (if $g = 0$) then $|\Gamma'/\Gamma''|$ is bounded above by $m_1^{\,n_1-1}$, $m_1^{\,n_1-1}\,m_2^{\,n_2-1}$ 
and $m_1^{\,n_1-1}\,m_2^{\,n_2-1}\,m_3^{\,n_3-1}$ respectively, so there are only finitely many. 

(For example, when $\Gamma = \Delta^+(3,3,6)$ the derived subgroup $\Gamma'$ has compact signature $(1;\, 2,2,2)$, 
and there are infinitely many smooth finite soluble quotients with derived length $2$, but this does not 
happen when $\Gamma = \Delta^+(3,3,4)$, because for that group, $\Gamma'$ has compact signature $(0;\, 4,4,4)$ 
and there is exactly one smooth finite soluble quotient with derived length $2$, namely $\Gamma/\Gamma''$, which has order $48$.)  

For the remainder of the proof, we suppose that one of Cases (4), (6) and (7) applies. 

Then if the parameter $g$ in the compact signature of $\Gamma'$ is positive, then as shown in our proof of Theorem~\ref{thm:main}, 
there are infinitely many possible choices for a positive integer $m$ such that there exists a smooth finite soluble quotient $\Gamma/L$ 
of $\Gamma$ with derived length $3$ for which $|\Gamma'/L|$ is multiple of $m^{2g}$ by some constant depending on the $m_i$ and $n_i$,  
and hence $\Gamma$ has infinitely many such quotients.  So from now on, let us suppose that $g = 0$.   
\smallskip

Then letting $n = |\Gamma:\Gamma'|$, we know that $\Gamma'$ has abbreviated signature $(0;\ m_1^{(n_1)}, m_2^{(n_2)}, m_3^{(n_3)})$  
where $(m_1,m_2,m_3) = (\frac{p}{e_1},\frac{q}{e_2},\frac{r}{e_3})$, and $n_i = \frac{n}{e_i}$ for $i = 1,2,3$. 
It follows easily that 
$$n\left (1 - \frac{1}{p} - \frac{1}{q} - \frac{1}{r} \right ) = n\mu(\Gamma) = \mu(\Gamma') 
= 0-2 + \frac{n}{e_1}(1-\frac{e_1}{p})  + \frac{n}{e_2}(1-\frac{e_2}{q})  + \frac{n}{e_3}(1-\frac{e_3}{r}), $$
which gives \ 
$\displaystyle{n < n+2 = n\left (\frac{1}{e_1}  + \frac{1}{e_2}  + \frac{1}{e_3} \right ) \ \hbox{and thus} \ \ \frac{1}{e_1}  + \frac{1}{e_2}  + \frac{1}{e_3} > 1.}$ 
\smallskip

Next, from Lemma~\ref{lem:abGamma}  we conclude that  $\Gamma/\Gamma'$ is a smooth non-trivial abelian quotient of $\Delta^+(e_1,e_2,e_3)$, 
and hence we find  there are only two possibilities, 
namely that either $(e_1,e_2,e_3) = (2,2,2)$, with  $\Gamma/\Gamma' \cong C_2 \times C_2$, 
or $(e_1,e_2,e_3)$ is some permutation of $(1,k,k)$ for some $k$, with $\Gamma/\Gamma' \cong C_k$. 
(In the other cases, such as $(e_1,e_2,e_3) = (2,3,3)$, there is no smooth abelian quotient.) 

In the first case, where $\Gamma/\Gamma' \cong C_2 \times C_2$, we find $n = |\Gamma:\Gamma'|= 4$ 
and $n_i = \frac{n}{e_i} = \frac{4}{2} =1$ for each $i$, 
a contradiction because at least one of the $n_i$ should be $1$ in Cases (4), (6) and (7). 

In the second case, where $(e_1,e_2,e_3)$ is some permutation of $(1,k,k)$ for some $k$, 
with $\Gamma/\Gamma' \cong C_k$, 
we have $n = k$, and we may suppose without loss of generality that $e_1 = e_2 = n$ and $e_3 = 1$, 
which gives $n_1 = n_2 = 1$ and $n_3 = n$.
Then since at most one of the $m_i$ is $1$, and at least one of the $n_i$ is $1$, 
we can further suppose without loss of generality that $(m_1,m_2,m_3) = (\frac{p}{n},\frac{q}{n},r)$, 
with at most one of $\frac{p}{n}$ and $\frac{q}{n}$ being $1$.
Hence we can assume that $\Gamma'$ has compact signature $(0;\,m_2,m_3^{(n_3)})$ or $(0;\,m_1,m_2,m_3^{(n_3)})$, 
which after a permutation of $(m_1,m_2,m_3)$ if necessary, puts $\Gamma$ into Case (4) or (6), but not Case (7). 

In particular, in Case (7) there are always infinitely many smooth finite soluble quotients with derived length $c = 3$,

We can treat these two compact signatures $(0;\,m_2,m_3^{(n_3)})$ or $(0;\,m_1,m_2,m_3^{(n_3)})$ together, 
by just taking the first as a special case of the second, with $m_1 = \frac{p}{k} = 1$, and following the situation for Case (6).
Here $N = |\Gamma':K_m(\Gamma')| = m_3^{\,n_3-1} = r^{\,n-1}$, 
and if $m = m_3$ then $K_m(\Gamma')$ has compact signature $(g';\, m_1^{(N)},m_2^{(N)})$ for some $g'$. 

Now in the proof of Theorem~\ref{thm:main}, if $g' > 0$ then in a similar way to earlier, 
there are infinitely many possible choices for a positive integer $m'$ giving rise to a smooth finite soluble 
quotient $\Gamma/L$ of $\Gamma$ with derived length $3$ for which $|K_m(\Gamma')/L| = (m')^{2g'} m_1^{\,N-1}m_2^{\,N-1}$, 
and hence infinitely many such quotients.  

So for the rest of the current proof, we may suppose that $g' = 0$, with $K_m(\Gamma')$ having compact signature $(0;\, m_1^{(N)},m_2^{(N)})$ 
where $N = r^{\,n-1}$, and that $(m_1,m_2) = (\frac{p}{k},\frac{q}{k})$. 
Then 
$$nN\left (1 - \frac{1}{p} - \frac{1}{q} - \frac{1}{r} \right ) = nN\mu(\Gamma) = \mu(K_m(\Gamma')) 
= 0-2 + N(1-\frac{n}{p})  + N(1-\frac{n}{q}), $$
which gives \ 
$\displaystyle{nN - \frac{nN}{r} = 2N -2 \ \ \hbox{and thus} \ \ 2 = N \left ( 2 - n + \frac{n}{r} \right ) = r^{n-2} ( 2r - nr + n ).}$ 
\smallskip

It follows that $r^{n-2} = 1$ or $2$ and hence either $n =  2$, or $n = 3$ and $r = 2$. 

In the former case, $k = n = 2$ and $1 = e_3 = \gcd(r, \lcm(p,q))$ so $r$ is coprime to both $p$ and $q$, 
and then $2 = k = e_1 = \gcd(p,\lcm(q,r)) = \gcd(p,q)$, and also $p \ne q$ for otherwise $p = q = 2$ 
but then $\Gamma = \Delta^+(2,2,r)$ is not hyperbolic. 
Conversely, if $r$ is coprime to both $p$ and $q$, and $p \ne q$, and $\gcd(p,q) = 2$, 
then $(e_1,e_2,e_3) = (2,2,1)$ and $n = k = 2$, so $r^{n-2} ( 2r - nr + n ) = 2$ and then $g' = 0$.

(Furthermore, if $p = 2$ or $q = 2$ then $\frac{p}{k} = 1$ or $\frac{q}{k} = 1$ and hence Case (4) applies, 
but otherwise $\frac{p}{k} > 1$ and $\frac{q}{k} > 1$ and hence Case (6) applies.) 

Finally, in the latter case, $r = 2$ and $k = n = 3$ and again $1 = e_3 = \gcd(r, \lcm(p,q))$ so $r$ is coprime to both $p$ and $q$, 
and then $3 = k = e_1 = \gcd(p,\lcm(q,r)) = \gcd(p,q)$, and also $p \ne q$ for otherwise $p = q = 3$ 
but then $\Gamma = \Delta^+(3,3,2)$ is not hyperbolic. 
Conversely, if $r = 2$ is coprime to both $p$ and $q$, and $p \ne q$, and $\gcd(p,q) = 3$, 
then $(e_1,e_2,e_3) = (3,3,1)$ and $n = k = 3$, so $r^{n-2} ( 2r - nr + n ) = 2(4-6+3) = 2$ and so $g' = 0$.

(Furthermore, if $p = 3$ or $q = 3$ then $\frac{p}{k} = 1$ or $\frac{q}{k} = 1$ and hence Case (4) applies, 
but otherwise $\frac{p}{k} > 1$ and $\frac{q}{k} > 1$ and hence Case (6) applies.) 
\end{proof}

Finally, we note that the special cases considered by Chetiya et al in~\cite{ChetiyaDuttaPatra} all lie in Case (4). 
For those we may take $(p,q,r) = (m,\ell u,\ell)$ with $m$ coprime to each of $\ell$ and $u$,  
and then $\Gamma'$ has compact signature $(0;\, u,m^{(\ell)})$, and $K_m(\Gamma')$ has compact signature $(g;\, u^{(N)})$ where $N = m^{\ell-1}$ 
and $g = 1 + \frac{1}{2}\,m^{\ell-2}\,(\ell m -\ell - 2m)$. 
Hence if $\ell = 2$ or if $\ell = 3$ and $m = 2$, then $g = 0$ and there are only finitely many smooth finite soluble quotients with derived length $3$, 
while otherwise $\ell m - \ell - 2m \ge 0$ and then $g > 0$ so there are infinitely many smooth finite soluble quotients with derived length $3$. 

\bigskip\bigskip

\noindent 
{\Large\bf Acknowledgements}

\medskip
We are grateful to the N.Z. Marsden Fund (via grant UOA2320) and the University of Auckland 
for supporting research that helped produce this paper, 
and to Javier Cirre, Gareth Jones, Gabriel Verret and Bradley Windleborn for some helpful discussions. 
Also we are happy to acknowledge extensive use of the {\sc Magma} system \cite{Magma} in 
experimental observations and partial confirmation of conjectures arising from those.  
Finally, we are grateful to the two referees for their helpful suggestions for improvements 
to the exposition of this paper.


\end{document}